\definecolor{red}{rgb}{1,0,0}
\newcommand{\vvirg}{ , \dots , }
\newcommand{\bfm}{\mathbf{m}}
\newcommand{\calA}{\mathcal{A}}
\newcommand{\bbF}{\mathbb{F}}
\newcommand{\bbK}{\mathbb{K}}
\newcommand{\bbN}{\mathbb{N}}
\renewcommand{\phi}{\varphi}
\newcommand{\eps}{\varepsilon}
\newcommand{\rank}{\mathrm{rank}}
\DeclareMathOperator{\subrank}{Q}
\DeclareMathOperator{\tensorrank}{R}
\DeclareMathAccent{\wtilde}{\mathord}{largesymbols}{"65}
\DeclareMathOperator{\aQ}{\underaccent{\wtilde}{Q}}
\DeclareMathOperator{\asympsubrank}{\underaccent{\wtilde}{Q}}
\newcommand{\GL}{\mathrm{GL}}
\newcommand{\w}{\mathsf{W}}
\newcommand{\un}{\mathsf{I}}
\newcommand{\n}{\mathsf{N}}
\newcommand{\deter}{\mathsf{D}}
\DeclareMathOperator{\maxrank}{max-rank}
\declaretheorem[name=Theorem, parent=section]{theorem}
\declaretheorem[name=Lemma, sibling=theorem]{lemma}
\theoremstyle{definition}
\theoremstyle{remark}
\numberwithin{equation}{section}
\newcommand{\degengeq}{\unrhd}
\begin{document}
\mbox{}\vspace{1em}
\begin{center}
	{\LARGE The next gap in the subrank of 3-tensors}
\\[1cm] \large

\setlength\tabcolsep{0em}
\newcommand{\myPad}{\hspace{2em}}
\centerline{%
\begin{tabular}{c@{\myPad}c}
	Fulvio Gesmundo & Jeroen Zuiddam\\[0.2em]
    
    \textsf{fgesmund@math.univ-toulouse.fr} & \textsf{j.zuiddam@uva.nl}\\[0.2em]
     Institut de Mathématiques de Toulouse  & University of Amsterdam
\end{tabular}%
}

\vspace{9mm}

\large
{\today}

\vspace{9mm}
\bf Abstract
\end{center}
\normalsize
\noindent
Recent works of Costa--Dalai, Christandl--Gesmundo--Zuiddam, Blatter--Draisma--Rupniew\-ski, and Briët--Christandl--Leigh--Shpilka--Zuiddam have investigated notions of discreteness and gaps in the possible values that asymptotic tensor ranks can take. In particular, it was shown that the asymptotic subrank and asymptotic slice rank of any nonzero 3-tensor is equal to 1, equal to 1.88, or at least 2 (over any field), and that the set of possible values of these parameters is discrete (in several regimes). We determine exactly the next gap, showing that the asymptotic subrank and asymptotic slice rank of any nonzero 3-tensor is equal to 1, equal to 1.88, equal to 2, or at least 2.68.

\vspace{1em}
\noindent
Keywords: subrank, asymptotic subrank, tensor degeneration\\
2020 Math.~Subj.~Class.: (primary) 15A69, (secondary) 14N07, 15A72, 68R05

\section{Introduction} \label{intro}

Unlike matrix rank, many natural notions of tensor rank (in particular, those defined in an asymptotic or amortized manner) may take non-integral values. For instance, the asymptotic subrank and asymptotic slice rank of the tensor $e_1 \otimes e_2 \otimes e_2 + e_2 \otimes e_1 \otimes e_2 + e_2 \otimes e_2 \otimes e_1$ equals $2^{h(1/3)} \approx 1.88$ where~$h$ is the binary entropy function. Applications often ask for determining the value of these parameters for specific tensors. This raises the question: What values can such parameters take? Are there gaps between the values? Are there accumulation points?

\paragraph{Concrete gaps.}
We briefly summarize the known results in this area, which can roughly be grouped into results about ``concrete gaps'' and the ``general structure'' of gaps.
Since the first gap result of Strassen \cite[Lemma~3.7]{strassen1988asymptotic}, who proved that the asymptotic subrank (and as a consequence, the asymptotic partition rank) of any nonzero $k$-tensor is either 1 or at least~$2^{2/k}$, several works have investigated notions of discreteness and gaps in the values of tensor parameters. Costa and Dalai \cite{DBLP:journals/jcta/CostaD21} proved that the asymptotic slice rank of any nonzero $k$-tensor is either $1$ or at least $2^{h(1/k)}$ where $h$ is the binary entropy function.
Christandl, Gesmundo and Zuiddam~\cite{cgz} proved the stronger analogous statement for the asymptotic subrank, and moreover showed for $k=3$ that the asymptotic subrank is equal to 1, equal to $2^{h(1/3)}\approx 1.88$, or at least $2$, leaving as an open problem whether the set of all possible values is discrete, and in particular what is the next possible value.

\paragraph{General structure.}
Blatter, Draisma and Rupniewski \cite{bdr} proved that the set of values of every normalized monotone tensor parameters over finite fields is well-ordered: in particular, it has no accumulation points from above. 
Christandl, Vrana and Zuiddam \cite{MR4495838} proved, using methods from representation theory and quantum information, that the asymptotic slice rank over the complex numbers takes only finitely many values on tensors of any fixed format, and thus only countably many values in general. Blatter, Draisma and Rupniewski \cite{https://doi.org/10.48550/arxiv.2212.12219} proved 
that a class of asymptotic tensor parameters over complex numbers take only countably many values; this class includes asymptotic subrank and asymptotic slice rank, over arbitrary fields.
Briët, Christandl, Leigh, Shpilka and Zuiddam~\cite{briet2023discreteness} proved that for a general class of asymptotic tensor parameters over several regimes, the set of values of any function in the class is discrete. This includes the asymptotic subrank over finite fields and the asymptotic slice rank over complex numbers.

\paragraph{New results and methods.}
In this paper, we prove a new concrete gap for the asymptotic subrank of 3-tensors over any field, showing that the asymptotic subrank of any nonzero tensor is equal to 1, equal to $2^{h(1/3)}\approx 1.88$,  equal to 2, or at least ${\approx}\, 2.68$ (\autoref{th:new-values}). The last value is the asymptotic subrank of the multiplication tensor of the trivial unital algebra of dimension~$3$. 

To obtain this result, we prove a structural result about restrictions between tensors. In our proof we make use of the notion of the maximum rank in the slice span of a tensor that was also central in \cite{briet2023discreteness}, and in particular we prove that this parameter remains as large as possible under generic restriction. We moreover use a result about degenerating to the trivial algebra of Blaser--Lysikov \cite{DBLP:conf/mfcs/BlaserL16} and a classification of matrix subspaces of low-rank of Atkinson \cite{MR695915} and Eisenbud--Harris \cite{MR954659}.

\section{Gaps in the asymptotic subrank}
In this section we will provide some preliminary definitions, briefly summarize the known results from \cite{cgz}, and discuss the new results in detail.

\subsection{Basic definitions}

We provide basic definitions here. For more background we refer to \cite{strassen1988asymptotic, burgisser1997algebraic,wigderson2022asymptotic}.
Let $\bbF$ be any field. In this work, we consider tensors of order three: let $T \in \bbF^{n_1} \otimes \bbF^{n_2} \otimes \bbF^{n_3}$ be a tensor over $\bbF$ with dimensions $(n_1, n_2, n_3)$. The subrank of $T$, denoted by $\subrank(T)$, is the largest number~$r\in \bbN$ such that there are linear maps $A_i : \bbF^{n_i} \to \bbF^r$ such that $(A_1 \otimes A_2\otimes A_3)T = \sum_{i=1}^r e_i \otimes e_i \otimes e_i$. The asymptotic subrank of $T$ is defined as 
\[
\asympsubrank(T) = \lim_{n \to \infty} \subrank(T^{\boxtimes n})^{1/n}
\]
where $\boxtimes$ is the Kronecker product on tensors. This limit exists and equals the supremum by Fekete's Lemma, since $\subrank$ is super-multiplicative. 

The flattenings of $T$ are the elements in $(\bbF^{n_1} \otimes \bbF^{n_2}) \otimes \bbF^{n_3}$, $\bbF^{n_1} \otimes (\bbF^{n_2} \otimes \bbF^{n_3})$ and $\bbF^{n_2} \otimes (\bbF^{n_1} \otimes \bbF^{n_3})$, obtained by naturally grouping the tensor factors of $T$. They are regarded as matrices, that is tensors of order two.

For any two tensors $T \in \bbF^{n_1} \otimes \bbF^{n_2} \otimes \bbF^{n_3}$ and $S \in \bbF^{m_1} \otimes \bbF^{m_2} \otimes \bbF^{m_3}$ we say $T$ restricts to~$S$ and write $T \geq S$ if there are linear maps $A_i : \bbF^{n_i} \to \bbF^{m_i}$ such that $(A_1 \otimes A_2\otimes A_3)T = S$. In particular, $\subrank(T)$ is the largest number $r\in \bbN$ such that $T \geq \sum_{i=1}^r e_i \otimes e_i \otimes e_i$.

\subsection{Previously known gaps}
Let $\bbK$ be any algebraically closed field.
Let the tensors $\un \in \bbK^2 \otimes \bbK^2 \otimes \bbK^2$ and $\w \in \bbK^2 \otimes \bbK^2 \otimes \bbK^2$ be defined by
\begin{align*}
\un &= e_1 \otimes e_1 \otimes e_1 + e_2 \otimes e_2 \otimes e_2,\\
\w &= e_1 \otimes e_1 \otimes e_2 + e_1 \otimes e_2 \otimes e_1 + e_2 \otimes e_1 \otimes e_1.
\end{align*}
In \cite{cgz} the following classification in terms of $\un$ and $\w$ is proven:
\begin{theorem}[\cite{cgz}]\label{thm:prev}
Let $n_1, n_2, n_3 \in \bbN$ be arbitrary.
For every nonzero $T \in \bbK^{n_1} \otimes \bbK^{n_2} \otimes \bbK^{n_3}$ exactly one of the following is true:
\begin{enumerate}[\upshape(a)]
    \item $T$ has a flattening of rank one;
    \item $\w \geq T$ and $T \geq \w$;
    \item $T \geq \un$.
\end{enumerate}
\end{theorem}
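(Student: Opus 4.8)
The plan is to reduce the statement to the classification of $2\times2\times2$ tensors over $\bbK$, together with a reduction showing that every strictly larger concise tensor restricts to $\un$.

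\emph{Mutual exclusivity.} This part should be immediate. If $T\ge\un$, or if $\w\ge T\ge\w$ (using $T\ge\w$), then every flattening of $T$ has rank at least $2$, since restriction cannot increase flattening rank and both $\un$ and $\w$ have all flattenings of rank $2$; this rules out (a). And if both $\w\ge T$ and $T\ge\un$ held, then $\w\ge\un$, whence $1=\subrank(\w)\ge\subrank(\un)=2$, a contradiction; this rules out (b) and (c) holding together. So the substance is that at least one alternative always holds.

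\emph{Reduction to a concise tensor, and the base case.} Assume (a) fails. Replacing each $\bbK^{n_i}$ by the image of the $i$-th flattening, I may take $T$ concise; this preserves the truth of both (b) and (c), because $\un$ and $\w$ are concise, so any restriction to or from them factors through the concise core. Conciseness forces every leg to have dimension $\ge1$, and a leg of dimension $1$ would give a rank-one flattening, so all legs have dimension $\ge2$. If all three legs have dimension exactly $2$, I would invoke the classification of $\bbK^2\otimes\bbK^2\otimes\bbK^2$ tensors (finitely many $\GL_2^{\times3}$-orbits over an algebraically closed field, read off from the Kronecker normal form of the defining pencil): a concise such tensor is $\GL_2^{\times3}$-equivalent to $\un$ or to $\w$, which is exactly (c) or (b).

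\emph{The general case.} It remains to treat a concise $T$ with some leg of dimension $\ge3$, where I would prove directly that $T\ge\un$. Say the third leg has dimension $\ge3$, and look at the slice span $\calT\subseteq\Mat_{n_1\times n_2}$, a space of dimension $\ge3$ with no common left or right kernel; since a linear space of rank-$\le1$ matrices lies inside some $u\otimes\bbK^{n_2}$ or $\bbK^{n_1}\otimes v$, conciseness forces $\calT$ to contain a matrix of rank $\ge2$. To show $T\ge\un$ it is enough to produce a two-dimensional $W\subseteq\calT$ and maps $A_1\colon\bbK^{n_1}\to\bbK^2$, $A_2\colon\bbK^{n_2}\to\bbK^2$ carrying the pencil $W$ onto $\langle E_{11},E_{22}\rangle$. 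I would fix $M_0\in\calT$ of maximal rank $\rho\ge2$, normalize it to $\left(\begin{smallmatrix}I_\rho&0\\0&0\end{smallmatrix}\right)$, and study the pencils $sM_0+tM$, $M\in\calT$: if some such pencil has two distinct generalized eigenvalues (two $1\times1$ regular Kronecker blocks) it restricts to $\un$ at once; otherwise all of them have a single generalized eigenvalue --- this already occurs for the regular representation of $\bbK[x]/(x^3)$ --- and I would use conciseness to locate a further slice of $\calT$ supported off the relevant $2\times2$ corner and extract $\un$ from that corner after a coordinate change.

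\emph{The main obstacle.} The hard part is this last step: showing that a \emph{concise} slice span of dimension $\ge3$ must contain a pencil equivalent to $\un$ rather than one equivalent to $\w$ or one with a single generalized eigenvalue. Passing to a generic two-dimensional subspace of $\calT$ does not obviously work, since it can collapse one of the first two legs below dimension $2$ (destroying conciseness) or can land entirely inside the degeneracy locus; a robust choice seems to require combining conciseness with the structure theory of matrix pencils, or an induction on $\sum n_i$ using minimality of a hypothetical counterexample. Everything else --- exclusivity, the reduction to conciseness, and the $2\times2\times2$ classification --- is routine by comparison.
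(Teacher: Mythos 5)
The paper does not prove this statement itself --- it is quoted from \cite{cgz} --- so the comparison is against the argument in that reference. Your exclusivity argument, the reduction to a concise tensor, and the appeal to the classification of concise tensors in $\bbK^2\otimes\bbK^2\otimes\bbK^2$ are all correct and match the standard route. The problem is the step you yourself label ``the main obstacle'': showing that a concise tensor with some flattening rank at least $3$ restricts to $\un$. As written, that paragraph is not a proof but a list of difficulties (generic $2$-dimensional subspaces of the slice span may fail, the single-generalized-eigenvalue case is unresolved, the $\bbK[x]/(x^3)$-type examples are not handled), so the argument is genuinely incomplete at its only hard point.

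The missing idea is to restrict to a small format \emph{before} doing the pencil analysis, rather than working inside $\Mat_{n_1\times n_2}$ for arbitrary $n_1,n_2$. This is exactly what \cite{cgz} does with its Proposition~3.1 (mirrored by \autoref{lem:restr-Qi} of the present paper): a generic restriction of a concise $T$ with $n_1,n_2\geq 2$, $n_3\geq 3$ to format $2\times 2\times 3$ keeps all flattening ranks as large as possible, hence is again concise. Its third slice span is then a $3$-dimensional subspace $\calA\subseteq\Mat_{2\times 2}$. Such a space contains an invertible matrix (the singular matrices form a quadric in $\bbP^3$ containing no $2$-plane); normalizing it to the identity, $\calA$ also contains a $Y$ with two distinct eigenvalues, since the discriminant locus $\{\operatorname{tr}(Y)^2=4\det(Y)\}$ is a rank-$3$ quadric cone, again containing no $2$-plane. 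Diagonalizing $Y$ and projecting the third factor onto $\langle I,Y\rangle^*$ yields $T\geq\un$, and this single computation disposes of all the cases (including the $\bbK[x]/(x^3)$ regular representation) that your direct approach leaves open. Without this reduction, or an equivalent substitute, the proof does not go through.
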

By monotonicity of asymptotic subrank, using the known values $\asympsubrank(\w) = 2^{h(1/3)}$ and $\asympsubrank(\un) = 2$, the following gap theorem for asymptotic subrank can be deduced immediately from \autoref{thm:prev}:
\begin{theorem}[\cite{cgz}]\label{thm:prev-Qtilde}
        For every nonzero $T \in \bbK^{n_1} \otimes \bbK^{n_2} \otimes \bbK^{n_3}$, exactly one of the following is true:
    \begin{enumerate}[\upshape(a)]
    \item $\asympsubrank(T) = 1$;
    \item $\asympsubrank(T) = c_1 \coloneqq 2^{h(1/3)} \approx 1.88988$;
    \item $\asympsubrank(T) \geq 2$.
    \end{enumerate}
\end{theorem}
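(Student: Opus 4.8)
The plan is to deduce \autoref{thm:prev-Qtilde} directly from \autoref{thm:prev}, using monotonicity of the asymptotic subrank under restriction together with the known values $\asympsubrank(\w)=2^{h(1/3)}$ and $\asympsubrank(\un)=2$. I would go through the three alternatives of \autoref{thm:prev} one at a time and read off the corresponding value.

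In case (a), $T$ has a flattening of rank one. Here I would recall that $\subrank(S)$ is at most the rank of any flattening of $S$: restriction can only decrease flattening ranks, while $\sum_{i=1}^r e_i\otimes e_i\otimes e_i$ has all three flattening ranks equal to $r$. Moreover flattening ranks are multiplicative under $\boxtimes$, so $T^{\boxtimes n}$ again has a flattening of rank $1^n=1$; hence $\subrank(T^{\boxtimes n})\le 1$ for all $n$, and since $T\ne 0$ this forces $\asympsubrank(T)=1$. In case (b), $\w\ge T$ and $T\ge\w$ give $\asympsubrank(\w)\ge\asympsubrank(T)\ge\asympsubrank(\w)$ by monotonicity, so $\asympsubrank(T)=\asympsubrank(\w)=2^{h(1/3)}=c_1$. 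In case (c), $T\ge\un$ gives $\asympsubrank(T)\ge\asympsubrank(\un)=2$, where $\asympsubrank(\un)=2$ is immediate from $\subrank(\un)=2$ and the fact that the format $2\times 2\times 2$ already bounds the subrank by $2$.

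For the ``exactly one'' clause I would note that the three alternatives in \autoref{thm:prev} are mutually exclusive, that each implies the correspondingly labelled conclusion above, and that the three conclusions are themselves mutually exclusive as numerical statements since $1<c_1<2$ (explicitly $c_1=3\cdot 2^{-2/3}\approx 1.88988$); hence exactly one of (a), (b), (c) holds in the value form as well. I do not anticipate any genuine obstacle: all the substance sits in \autoref{thm:prev}, and the present statement is a routine translation via monotonicity. The only points that require a little care are handling case (a) correctly across Kronecker powers — which the multiplicativity of flattening rank takes care of — and recording the elementary numerical facts about $c_1$, $\asympsubrank(\w)$, and $\asympsubrank(\un)$.
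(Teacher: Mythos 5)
Your proposal is correct and matches the paper's argument: the paper also deduces \autoref{thm:prev-Qtilde} immediately from \autoref{thm:prev} by monotonicity of $\asympsubrank$ together with the values $\asympsubrank(\w)=2^{h(1/3)}$ and $\asympsubrank(\un)=2$. Your additional details (multiplicativity of flattening rank for case (a), the identity $c_1=3\cdot 2^{-2/3}$) are accurate and merely flesh out what the paper leaves implicit.
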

\autoref{thm:prev-Qtilde} is also true with $\asympsubrank$ replaced by asymptotic slice rank, since for $\un$ and $\w$ the asymptotic subrank equals the asymptotic slice rank.

\subsection{New result}

We prove a classification that extends \autoref{thm:prev}. To state it we need to define the following tensors.
Let $\n^{(1)}, \n^{(2)},\n^{(3)} \in \bbK^3 \otimes \bbK^3 \otimes \bbK^3$ be defined by
\begin{align*}
\n^{(1)} &= e_1 \otimes e_1 \otimes e_1 + e_2 \otimes e_1 \otimes e_2 + e_2 \otimes e_2 \otimes e_1 + e_3 \otimes e_1 \otimes e_3 + e_3 \otimes e_3 \otimes e_1 \\
\n^{(2)} &= e_1 \otimes e_1 \otimes e_1 + e_2 \otimes e_2 \otimes e_1 + e_1 \otimes e_2 \otimes e_2 + e_3 \otimes e_3 \otimes e_1 + e_1 \otimes e_3 \otimes e_3 \\
\n^{(3)} &= e_1 \otimes e_1 \otimes e_1 + e_1 \otimes e_2 \otimes e_2 + e_2 \otimes e_1 \otimes e_2 + e_1 \otimes e_3 \otimes e_3 + e_3 \otimes e_1 \otimes e_3.
\end{align*}
Note that these tensors are cyclic shifts of each other. As a bilinear map, regarding the first and second tensor factors as ``inputs'' and the third factor as ``output'', $\n^{(3)}$ is the tensor encoding the multiplication map of the $3$-dimensional trivial unital algebra $\bbK[x,y]/(x^2,xy,y^2)$.

Let $\deter \in \bbK^3 \otimes \bbK^3 \otimes \bbK^3$ be defined by
\[
\deter = e_1 \otimes e_2 \otimes e_3 - e_1 \otimes e_3 \otimes e_2 + e_2 \otimes e_1 \otimes e_3 - e_2 \otimes e_3 \otimes e_1 + e_3 \otimes e_1 \otimes e_2 - e_3 \otimes e_2 \otimes e_1.
\]
In other words, $\deter$ is the unique up to rescaling fully skew-symmetric tensor $e_1 \wedge e_2 \wedge e_3 \in \Lambda^3 \bbK^3$. As a trilinear map, it takes three vectors in $\bbK^3$ to the determinant of the $3 \times 3$ matrix whose columns are the three vectors.

For any two tensors $T$ and $S$ we say $T$ degenerates to $S$, and write $T \degengeq S$, if there are linear maps $A(\eps), B(\eps), C(\eps)$ whose coefficients are Laurent polynomials in the formal variable~$\eps$, such that
\[
(A(\eps) \otimes B(\eps) \otimes C(\eps))  T = S + \eps S_1 + \eps^2 S_2 + \cdots + \eps^t S_t
\]
for some arbitrary tensors $S_1, \ldots, S_t$. It is known that asymptotic subrank is monotone under degeneration \cite[Proposition 5.10]{strassen1988asymptotic}, \cite[Proposition 15.26]{burgisser1997algebraic}.

\begin{theorem}\label{thm:new}
Let $n_1, n_2, n_3 \in \bbN$ be arbitrary.
For every nonzero $T \in \bbK^{n_1} \otimes \bbK^{n_2} \otimes \bbK^{n_3}$ exactly one of the following is true:
\begin{enumerate}[\upshape(a)]
    \item $T$ has a flattening of rank one;
    \item $\w \geq T$ and $T \geq \w$;
    \item $T \geq \un$ and $T$ has a flattening of rank two;
    \item all flattenings of $T$ have rank at least three, in which case at least one of the following is true
    \begin{itemize}
        \item $T \degengeq \n^{(i)}$ for some $i \in [3]$;
        \item $T \geq \deter$ and $\deter \geq T$.
    \end{itemize}
\end{enumerate}
\end{theorem}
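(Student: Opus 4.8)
The plan is to bootstrap from \autoref{thm:prev}. Cases~(a) and~(b) are the first two cases there, so the real content is the dichotomy inside case~(d) together with the mutual exclusivity of the four cases; the latter I would dispatch first by inspecting flattening ranks and values of $\asympsubrank$: a rank-one flattening rules out $T\geq\un$ (all flattenings of $\un$ have rank two), so~(a) is incompatible with~(c) and~(d); in case~(b) all flattenings of $T$ have rank exactly two, which excludes~(a) and~(d), while $\w\geq T\geq\un$ is impossible as it would give $\asympsubrank(\w)\geq\asympsubrank(\un)$, excluding~(c); and~(c) versus~(d) is exactly the presence or absence of a rank-two flattening. For existence: if $T$ is not in~(a) or~(b) then $T\geq\un$ by \autoref{thm:prev}, so all flattenings of $T$ have rank $\geq 2$; if some has rank exactly two we are in~(c); so it remains to prove the dichotomy assuming all flattenings have rank $\geq 3$. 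Replacing $T$ by its concise core --- a tensor to which $T$ restricts and which restricts to $T$, so that every assertion in the statement is preserved --- I may assume $T$ is concise, whence $n_1,n_2,n_3\geq 3$.

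I would then split case~(d) according to whether some slice span of $T$ (the image of a flattening, viewed as a space of matrices) contains a matrix of rank $\geq 3$ --- regime~(d1) --- or every slice span of $T$ consists of matrices of rank $\leq 2$ --- regime~(d2). In regime~(d1), say the third slice span contains a rank-$3$ matrix. Applying generic linear maps $A_1,A_2,A_3$ of rank $3$ to the three factors, the genericity lemma (which I would prove by a dimension count on Grassmannians and determinantal loci, using $n_i\geq 3$ and all flattenings of rank $\geq 3$) says that $T'\coloneqq(A_1\otimes A_2\otimes A_3)T$ is still concise and its third slice span still contains a rank-$3$, hence invertible, $3\times3$ matrix --- a generic $3$-dimensional subspace of the third slice span of $T$ still contains a rank-$\geq 3$ matrix, and a generic $3\times3$ projection of such a matrix is invertible. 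After a change of basis this matrix is the identity, so $T'$ is a concise $3\times3\times3$ tensor possessing a unit in one factor; by the degeneration theorem of Bl\"aser--Lysikov~\cite{DBLP:conf/mfcs/BlaserL16} it degenerates to the structure tensor of the $3$-dimensional trivial unital algebra, which --- according to which factor carried the unit, the three choices being cyclic shifts of one another --- is one of $\n^{(1)},\n^{(2)},\n^{(3)}$. Since $T\geq T'$ gives $T\degengeq T'$ and degeneration is transitive, $T\degengeq\n^{(i)}$ for some $i\in[3]$.

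In regime~(d2), each slice span $\mathcal{S}_\ell$ is a space of matrices of rank $\leq 2$ of dimension $n_\ell\geq 3$. By the classification of matrix spaces of bounded rank two of Atkinson~\cite{MR695915} and Eisenbud--Harris~\cite{MR954659}, each $\mathcal{S}_\ell$ is, up to equivalence, one of: matrices with a common $\leq 2$-dimensional column space; matrices with a common $\leq 2$-dimensional row space; a ``$(1,1)$-compression space'' (matrices supported on one distinguished row together with one distinguished column); or the space of $3\times3$ alternating matrices inside a $3\times3$ block. The first two types confine $T$ to some $\bbK^2\otimes\bbK^n\otimes\bbK^{n'}$ and so contradict conciseness since every $n_i\geq 3$. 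For the $(1,1)$-compression type, say for $\mathcal{S}_3$, one gets $T=T_1+T_2$ with $T_1$ of first-flattening rank one and $T_2$ of second-flattening rank one; conciseness and the second flattening having rank $\geq 3$ force the nonzero slices of $T_1$ and $T_2$ to have rank $\geq 2$, and then the rank-$\leq 2$ condition on the first and second slice spans forces $T$ into $\bbK^{n_1}\otimes\bbK^{n_2}\otimes W$ for a common row space $W\subseteq\bbK^{n_3}$ of those two slices with $\dim W\leq 3$; a short case split on $\dim W\in\{2,3\}$ (for $\dim W=2$, $T$ fails to be concise in the third factor; for $\dim W=3$ one gets $n_3=3$ and rerunning the argument in the other two directions collapses the format and a further short analysis again breaks conciseness) rules this type out. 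Hence every $\mathcal{S}_\ell$ is the alternating type; $\mathcal{S}_3$ being equivalent to the $3\times3$ alternating space forces $n_1=n_2=3$, conciseness forces $n_3=3$, and since a concise tensor is determined by its third slice span up to the action on the third factor, $T$ lies in the $\GL_3^{\times3}$-orbit of $\deter$. Thus $T\geq\deter$ and $\deter\geq T$, which completes case~(d) and the proof.

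The main obstacle, I expect, is twofold. First, the genericity lemma must be arranged so that conciseness of the restriction and richness of the restricted slice span hold simultaneously for one generic choice of $A_1,A_2,A_3$, which takes care with the dimension counts (and presumably this is where the bulk of the new technical work lies). Second, the type-analysis in regime~(d2) must exhaustively eliminate the $(1,1)$-compression possibility while tracking conciseness across all three factors at once. A further point to pin down is that the Bl\"aser--Lysikov degeneration is applied to $3\times3\times3$ tensors that merely possess a unit in one slice direction, rather than only to structure tensors of associative algebras, so one should verify it holds at that generality.
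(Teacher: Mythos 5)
Your overall architecture (reduce to a concise tensor with all flattening ranks at least three, then split on the maximal rank occurring in the slice spans, using a generic-restriction lemma, Bl\"aser--Lysikov, and the Atkinson/Eisenbud--Harris classification) is the paper's, and your handling of mutual exclusivity, the reduction via \autoref{thm:prev}, and the skeleton of regime~(d2) are sound. But your case split is drawn in the wrong place, and the point you defer at the end (``one should verify it holds at that generality'') is not a loose end --- it is a genuine gap. The Bl\"aser--Lysikov lemma needs $\subrank_i(T')=\subrank_j(T')=3$ in \emph{two} distinct directions: one needs an element acting invertibly on the left and one acting invertibly on the right to manufacture a two-sided unit. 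A single invertible matrix in a single slice span is strictly weaker (it gives a nondegenerate pairing, not a unit), and no direct generalization can rescue the step: each $\n^{(k)}$ has $\maxrank$ equal to $3$ in exactly two of its three slice directions, and $\subrank_i$ is monotone under degeneration (it is monotone under restriction and the locus $\{\subrank_i\leq r\}$ is Zariski closed), so a concise $3\times3\times3$ tensor with, say, $\subrank_3=3$ but $\subrank_1=\subrank_2=2$ could not degenerate to \emph{any} $\n^{(k)}$. Such tensors happen not to exist, but ruling them out is exactly the content of the skew-symmetric analysis in your regime~(d2); as written, your branch~(d1) silently presupposes it.

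The repair is the paper's case split. First, all flattening ranks $\geq 3$ force $\subrank_i(T)\geq 2$ for all $i$ (the rank-one part of \autoref{lem:classification} plus conciseness), so either two of the $\subrank_i$ are $\geq 3$, or two of them equal $2$. In the first case a generic restriction to format $3\times3\times3$ must preserve the max-rank in \emph{both} of those directions simultaneously (\autoref{lem:restr-Qi}); your genericity lemma tracks conciseness plus one slice span, which is not enough to enter the Bl\"aser--Lysikov hypothesis. In the second case the classification, together with flattening ranks $\geq 3$, eliminates the two-row, two-column, and row-plus-column types and forces all three slice spans to be the $3\times3$ skew-symmetric space, whence $T\cong\deter$; your elimination of the $(1,1)$-compression type is only sketched here, whereas \autoref{lem:D} carries it out concretely by exhibiting a rank-$3$ matrix in a second slice span, contradicting $\subrank_j(T)=2$.
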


In order to discuss the resulting gaps in the values of the asymptotic subrank, we need to know the values of $\asympsubrank(\n^{(i)})$ and $\asympsubrank(\deter)$.
Define the number $c_2 \coloneqq 2^{\tau + h(\tau)}$ where $\tau \in (0,1/2)$ is the unique solution of 
    \[
    h(2\tau)- h(\tau)+\tau = 0,
    \]
(where $h$ is the binary entropy function) which numerically evaluates to $c_2 \approx 2.68664$.

The value of $\asympsubrank(\n^{(i)})$ was computed in \cite{strassen1991degeneration}, the one of $\asympsubrank(\deter)$ was computed implicitly in \cite{CopperWinog:MatrixMultiplicationArithmeticProgressions} and more explicitly in \cite{ConGesLanVen:RankBordRankKronPowers}. We record here the two results:
\begin{lemma}
    For every $i \in [3]$, we have $\asympsubrank(\n^{(i)}) = c_2 \approx 2.68664$.
\end{lemma}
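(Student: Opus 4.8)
The value of $\asympsubrank(\n^{(i)})$ goes back to Strassen: $\n^{(3)}$ is the multiplication tensor of the trivial unital algebra $\bbK[x,y]/(x^2,xy,y^2)$, i.e.\ the $q=3$ member of the family of trivial-algebra tensors whose asymptotic subrank is computed in \cite{strassen1991degeneration} (see also \cite[Ch.~15]{burgisser1997algebraic}). I would organise the proof in three steps. First, since $\n^{(1)},\n^{(2)},\n^{(3)}$ are cyclic shifts of one another and $\asympsubrank$ is invariant under permuting the tensor factors, it is enough to treat $\n^{(3)}$. Its support $\Phi=\{(1,1,1),(1,2,2),(2,1,2),(1,3,3),(3,1,3)\}$ is \emph{tight}: the injective maps $u_1=u_2=(0,1,2)$ and $u_3=(0,-1,-2)$ on $[3]$ satisfy $u_1(a)+u_2(b)+u_3(c)=0$ on all of $\Phi$. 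By Strassen's analysis of tight tensors (the upper bound via the support functionals, the matching lower bound via combinatorial degenerations), one has
\[
\asympsubrank(\n^{(3)}) \;=\; \max_{P}\ \min_{i\in[3]}\ 2^{H(P^{(i)})},
\]
where $P$ ranges over probability distributions on $\Phi$, $P^{(i)}$ is the $i$-th marginal, and $H$ is the Shannon entropy, so that $H(1-t,t)=h(t)$.

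Second, I would evaluate this by symmetry. The tensor $\n^{(3)}$ is invariant under exchanging its first two tensor factors (it is the structure tensor of a \emph{commutative} algebra) and under the simultaneous swap $e_2\leftrightarrow e_3$ in all three factors (the relabelling $x\leftrightarrow y$). On $\Phi$ these fix the point $(1,1,1)$, act transitively on the other four points, and only permute the three marginals among themselves; since $P\mapsto\min_i 2^{H(P^{(i)})}$ is concave and invariant under this group, its maximum is attained at a symmetric distribution $P$ putting mass $1-4q$ on $(1,1,1)$ and mass $q$ on each of the other four points, $q\in[0,1/4]$. Computing marginals, $P^{(1)}=P^{(2)}=(1-2q,q,q)$ and $P^{(3)}=(1-4q,2q,2q)$, and grouping the two equal atoms in each,
\[
\min_i\ 2^{H(P^{(i)})} \;=\; 2^{\,\min\left(h(2q)+2q,\ h(4q)+4q\right)}.
\]

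Third, I would maximise $\varphi(q):=\min\!\left(h(2q)+2q,\ h(4q)+4q\right)$ over $q\in[0,1/4]$. Here $h(2q)+2q$ is strictly increasing, while $h(4q)+4q$ is increasing on $[0,1/6]$ and decreasing on $[1/6,1/4]$ and strictly larger than $h(2q)+2q$ at $q=1/6$; a short calculus check (using the endpoint values and monotonicities) shows the two graphs meet at a single point $q^\star\in(1/6,1/4)$, which maximises $\varphi$. There the equality $h(2q^\star)+2q^\star=h(4q^\star)+4q^\star$ becomes $h(4q^\star)-h(2q^\star)+2q^\star=0$, i.e.\ with $\tau:=2q^\star\in(0,1/2)$ exactly $h(2\tau)-h(\tau)+\tau=0$, and the optimal value is $\varphi(q^\star)=h(2q^\star)+2q^\star=h(\tau)+\tau$. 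Hence $\asympsubrank(\n^{(3)})=2^{\tau+h(\tau)}=c_2$, and by the first step the same holds for $\n^{(1)}$ and $\n^{(2)}$.

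The one substantial ingredient is the tight-tensor formula of the first step — in particular its lower bound, which requires exhibiting asymptotically matching restrictions through Strassen's combinatorial-degeneration construction rather than through any algebra-theoretic shortcut (indeed each Kronecker power $\n^{(3)\boxtimes n}$ is a local algebra, hence carries no nontrivial idempotents, yet its subrank grows like $c_2^{\,n}$). We take this formula from \cite{strassen1991degeneration}; everything else is the symmetry reduction and a one-variable optimisation.
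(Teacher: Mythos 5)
Your proof is correct and follows essentially the same route as the paper, which simply cites Strassen's computation for the structure tensor of the $3$-dimensional null algebra (\cite[Eq.~6.19]{strassen1991degeneration} with $q=2$) together with the same permutation-invariance reduction to $\n^{(3)}$. You additionally unpack that citation via the tight-support entropy formula, and the details check out: the symmetric distribution, the substitution $\tau=2q^\star$, and the crossing condition $h(2\tau)-h(\tau)+\tau=0$ with optimal value $2^{\tau+h(\tau)}$ agree exactly with the paper's definition of $c_2$.
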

\begin{proof}
    The tensor $\n^{(3)}$ is the structure tensor of the 3-dimensional null-algebra as described in \cite[page 168]{strassen1991degeneration}.
    From Equation 6.19 (with $q=2$), we find that $\asympsubrank(\n^{(3)}) = c_2$.  
    The values of $\asympsubrank(\n^{(1)})$ and $\asympsubrank(\n^{(2)})$ are the same as $\asympsubrank(\n^{(3)})$, since they are obtained from $\n^{(3)}$ by permuting the tensor factors.
\end{proof}

\begin{lemma}\label{lem:QD}
    $\asympsubrank(\deter) = 3$.
\end{lemma}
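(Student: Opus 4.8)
The plan is to prove the two matching bounds $\asympsubrank(\deter)\le 3$ and $\asympsubrank(\deter)\ge 3$ separately; the first is a one-line flattening argument and the second carries all the content.

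\emph{Upper bound.} First I would use the flattening rank. Up to sign conventions and the identification $\Lambda^2\bbK^3\cong\bbK^3$, each of the three flattenings of $\deter$ is the canonical isomorphism $\bbK^3\to\Lambda^2\bbK^3$, so has rank $3$; hence the corresponding flattening of $\deter^{\boxtimes n}$ is the $n$-fold Kronecker power of a rank-$3$ matrix and has rank exactly $3^n$. Since $\sum_{i=1}^r e_i\otimes e_i\otimes e_i$ has all three flattenings of rank $r$ and flattening rank does not increase under restriction, $\subrank(\deter^{\boxtimes n})\le 3^n$ for every $n\in\bbN$; taking $n$-th roots gives $\asympsubrank(\deter)\le 3$.

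\emph{Lower bound.} I would first stress that $\asympsubrank(\deter)\ge 3$ is a genuinely asymptotic statement: $\subrank(\deter)=2$, in particular strictly less than $3$ --- e.g.\ because every matrix in any slice span of $\deter$ is singular, whereas a subrank-$3$ tensor in $\bbK^3\otimes\bbK^3\otimes\bbK^3$ is $\GL_3^{\times 3}$-equivalent to the diagonal tensor $\sum_{i=1}^3 e_i\otimes e_i\otimes e_i$, whose slice spans contain invertible matrices. So no single restriction or degeneration of $\deter$ itself can give the bound, and one must go through Strassen's laser-method machinery for tight tensors. Concretely: $\deter$ is a \emph{tight} tensor, since with $u_1=u_2=u_3=\id\colon[3]\to\bbZ$ every triple $(a,b,c)$ in the support of $\deter$ --- which is exactly $\{(\sigma(1),\sigma(2),\sigma(3)):\sigma\in S_3\}$, the set of $3\times 3$ permutation matrices --- satisfies $u_1(a)+u_2(b)+u_3(c)=6$, while every coefficient of $\deter$ on that support is $\pm 1\ne 0$. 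For a tight tensor $T$ with support $\Phi$, Strassen's combinatorial lower bound --- restrict each tensor factor to the typical set of a chosen distribution $P\in\calP(\Phi)$, then use a Behrend/Salem--Spencer ``salad'' together with the Coppersmith--Winograd zeroing-out trick to extract an honest diagonal of size $2^{n\min_i H(P_i)-o(n)}$ --- yields $\asympsubrank(T)\ge\max_{P\in\calP(\Phi)}\min_{i\in[3]}2^{H(P_i)}$, where $P_i$ denotes the $i$-th marginal and $H$ the Shannon entropy. For $\deter$, taking $P$ uniform on $S_3$ makes all three marginals uniform on $[3]$ (each symbol occurs in exactly two of the six permutations), so $\min_i 2^{H(P_i)}=3$ and hence $\asympsubrank(\deter)\ge 3$. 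This is the computation carried out implicitly in \cite{CopperWinog:MatrixMultiplicationArithmeticProgressions} and spelled out for $\deter$ in \cite{ConGesLanVen:RankBordRankKronPowers}. Combining the two bounds gives $\asympsubrank(\deter)=3$.

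The main obstacle is precisely this last step: the gap between $\subrank(\deter^{\boxtimes n})$ and $3^n$ closes only subexponentially and so cannot be detected at any fixed Kronecker power, which forces one to invoke (or redo) the full tight-tensor analysis; the other ingredients --- the flattening upper bound, the verification of tightness, and the marginal computation --- are routine.
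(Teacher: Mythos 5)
Your proof is correct and follows essentially the same route as the paper: the upper bound via the rank-$3$ flattenings, and the lower bound via Strassen's tight-tensor machinery applied to the support $\{(\sigma(1),\sigma(2),\sigma(3)):\sigma\in S_3\}$ with the uniform distribution, whose marginals are uniform on $[3]$. Your phrasing uses the combinatorial form $\max_P\min_i 2^{H(P_i)}$ while the paper invokes the equivalent support-functional formulation of \cite[Proposition 5.4]{strassen1991degeneration}, but this is the same argument.
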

\begin{proof}
    The value of $\asympsubrank(\deter)$ is at most 3 because the flattenings ranks of $\deter$ are 3. The lower bound follows from a standard application of the support functional method of Strassen~\cite[Proposition 5.4]{strassen1991degeneration}. For this we first observe that the support of $\deter$ is tight in the basis that we presented it. The support is symmetric so the minimum over $\theta$ is attained for the uniform $\theta$. The maximum over the probability distributions on the support is attained for the uniform distribution, which gives the required value.
\end{proof}

The essential information from \autoref{lem:QD} is that $\asympsubrank(\deter) \geq \asympsubrank(\n^{(i)})$. The above leads to the following:

\begin{theorem}\label{th:new-values}
    For every nonzero $T \in \bbK^{n_1} \otimes \bbK^{n_2} \otimes \bbK^{n_3}$, exactly one of the following is true:
    \begin{enumerate}[\upshape(a)]
    \item $\asympsubrank(T) = 1$;
    \item $\asympsubrank(T) = c_1 \approx 1.88988$;
    \item $\asympsubrank(T) = 2$;
    \item $\asympsubrank(T) \geq c_2 \approx 2.68664$.
    \end{enumerate}
\end{theorem}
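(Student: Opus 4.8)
The plan is to deduce \autoref{th:new-values} as a direct corollary of the structural classification \autoref{thm:new}, the two lemmas computing $\asympsubrank(\n^{(i)})=c_2$ and $\asympsubrank(\deter)=3$, and two elementary properties of asymptotic subrank. First I would record these two properties: \emph{(i)} $\asympsubrank$ is monotone under restriction and under degeneration, i.e.\ $T \geq S$ or $T \degengeq S$ implies $\asympsubrank(T) \geq \asympsubrank(S)$ (the degeneration case is the cited \cite[Proposition~5.10]{strassen1988asymptotic}); and \emph{(ii)} $\asympsubrank(T)$ is at most the minimum of the ranks of the three flattenings of $T$. For \emph{(ii)}, note that $\subrank(T)$ is bounded by each flattening rank, because restriction does not increase the rank of a flattening and all three flattenings of $\sum_{i=1}^{r} e_i\otimes e_i\otimes e_i$ have rank $r$; since flattening rank is multiplicative under $\boxtimes$, applying this to $T^{\boxtimes n}$ and taking $n$th roots gives the asymptotic statement.

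Next I would walk through the four mutually exclusive cases of \autoref{thm:new}. In case (a), property \emph{(ii)} gives $\asympsubrank(T)\leq 1$, while $\asympsubrank(T)\geq 1$ since $T\neq 0$, so $\asympsubrank(T)=1$. In case (b), $\w \geq T$ and $T \geq \w$ together with \emph{(i)} give $\asympsubrank(T)=\asympsubrank(\w)=2^{h(1/3)}=c_1$. In case (c), $T\geq \un$ gives $\asympsubrank(T)\geq \asympsubrank(\un)=2$ by \emph{(i)}, while the flattening of rank two gives $\asympsubrank(T)\leq 2$ by \emph{(ii)}, hence $\asympsubrank(T)=2$. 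In case (d), either $T\degengeq \n^{(i)}$ for some $i\in[3]$, so that $\asympsubrank(T)\geq \asympsubrank(\n^{(i)})=c_2$ by \emph{(i)} and the first lemma, or else $T\geq \deter$ and $\deter\geq T$, so that $\asympsubrank(T)=\asympsubrank(\deter)=3\geq c_2$ by \autoref{lem:QD}; in both subcases $\asympsubrank(T)\geq c_2$.

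Finally I would observe that the four alternatives in \autoref{th:new-values} are pairwise incompatible, because $1 < c_1 < 2 < c_2$ and the last alternative is $\asympsubrank(T)\geq c_2$; combined with the fact that \autoref{thm:new} guarantees that exactly one of its four cases holds, this shows exactly one of (a)–(d) holds. The only places that need any care are the flattening-rank upper bound in \emph{(ii)} and checking that the case analysis of \autoref{thm:new} transports faithfully to the value statements; the real content — the structural dichotomy and the value of $\asympsubrank(\n^{(i)})$ — is already contained in the results invoked, so I do not expect a genuine obstacle in this deduction.
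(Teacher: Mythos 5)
Your proposal is correct and follows essentially the same route as the paper: a case-by-case deduction from \autoref{thm:new} using monotonicity of $\asympsubrank$ under restriction and degeneration together with the flattening-rank upper bound, and the computed values of $\asympsubrank(\w)$, $\asympsubrank(\un)$, $\asympsubrank(\n^{(i)})$ and $\asympsubrank(\deter)$. The only difference is that you spell out the flattening-rank bound and the mutual exclusivity of the four value ranges, which the paper leaves implicit.
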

\begin{proof}
    We follow \autoref{thm:new} case by case. In case (a), the tensor has a flattening of rank one (but is nonzero), and thus $\asympsubrank(T) = 1$. In case (b), $T$ is equivalent to $\w$ so $\asympsubrank(T) = \asympsubrank(\w) = c_1$. In case (c), $T \geq \un$ so $\asympsubrank(T) \geq 2$ and $T$ has a flattening of rank two, so $\asympsubrank(T) \leq 2$. In case (d), either $T \degengeq \n^{(i)}$ for some $i \in [3]$, in which case $\asympsubrank(T) \geq \asympsubrank(\n^{(i)}) = c_2$ or $T \degengeq \deter$, in which case $\asympsubrank(T) \geq \asympsubrank(\deter) = 3 \geq c_2$.
\end{proof}
\autoref{th:new-values} is also true with $\asympsubrank$ replaced by asymptotic slice rank, since (by standard results) for each of the tensors~$\un$, $\w$, $\deter$ and $\n^{(i)}$ the asymptotic subrank equals the asymptotic slice rank.
\section{Preliminary Results}

Before proving our main result we discuss three preliminary lemmas that will play a central role in the proof.

\subsection{Rank of slice-spans under restriction}

For any matrix subspace $\mathcal{A}$, let $\maxrank(\mathcal{A})$ be the largest matrix rank of any element of~$\mathcal{A}$. By semicontinuity, $\maxrank(\calA)$ is the rank of generic elements of $\calA$. 

Let $T \in \bbK^{n_1*} \otimes \bbK^{n_2} \otimes \bbK^{n_3}$. For every $i$, let $T^{(i)} : \bbK^{n_i} \to \bbK^{n_j} \otimes \bbK^{n_j}$ be the $i$-th flattening of $T$, with $\{i,j,k\} = \{ 1,2,3\}$. Define $\subrank_i(T) = \maxrank( T^{(i)} (\bbK^{n_i *}))$, where $ T^{(i)} (\bbK^{n_i *})$ is regarded as a linear space of $n_j \times n_k$ matrices. The parameters $\subrank_i$ and their properties were used in \cite{briet2023discreteness} to prove discreteness of asymptotic tensor ranks.

Clearly $\subrank_i(T) \leq \min\{n_j,n_k\}$ for every distinct $i,j,k \in [3]$, and that~$\subrank_i$ is monotone under restriction. We prove that $\subrank_i(T)$ remain \emph{as large as possible} under (generic) restriction. This result, as well as its proof, is similar to \cite[Proposition 3.1]{cgz}.

\begin{lemma}\label{lem:restr-Qi}
    Let $T \in \bbK^{n_1} \otimes \bbK^{n_2} \otimes \bbK^{n_3}$. For any $m_1, m_2, m_3 \in \bbN$ there is a tensor $S \in \bbK^{m_1} \otimes \bbK^{m_2} \otimes \bbK^{m_3}$ such that $T \geq S$ and for every distinct $i,j,k \in [3]$ we have
    \[
    \subrank_i(S) = \min \{ \subrank_i(T), m_j, m_k\}.
    \]
\end{lemma}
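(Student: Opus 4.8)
The plan is to reduce to the case where only one target dimension is cut at a time, and then to perform that single cut by a generic coordinate restriction, showing that the generic restriction preserves all three parameters $\subrank_i$ at their maximal possible values. More precisely, by composing restrictions it suffices to treat the case $(m_1,m_2,m_3) = (n_1-1, n_2, n_3)$ (and its permutations): if we can always shrink one mode by one while keeping each $\subrank_i$ equal to $\min\{\subrank_i(T), m_j, m_k\}$ for the new dimension vector, then iterating brings us down to any $(m_1,m_2,m_3)$, and along the way each $\subrank_i$ tracks the running minimum, which is exactly the claimed value. So fix, say, the first mode and consider restricting via a generic surjection $A \colon \bbK^{n_1} \to \bbK^{n_1 - 1}$, leaving modes $2,3$ untouched; write $S = (A \otimes \id \otimes \id)T$.

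The key step is to analyze each $\subrank_i(S)$ separately. For $i = 2$: the slice span $S^{(2)}(\bbK^{n_2*})$ consists of matrices of the form $\sum_b \beta_b (A M_b)$ where $M_b \in \bbK^{n_1}\otimes\bbK^{n_3}$ are the slices of $T$ in the second mode; equivalently it is $A$ applied (on the left) to the slice span $T^{(2)}(\bbK^{n_2*})$. Since a generic corank-one projection of $\bbK^{n_1}$ drops the rank of a fixed matrix of rank $r$ to $\min\{r, n_1 - 1\}$ — and, by semicontinuity plus a dimension count on the locus where the kernel line of $A$ meets the column space, drops the *generic* rank of the whole family to $\min\{\maxrank(T^{(2)}(\bbK^{n_2*})), n_1-1, n_3\}$ — we get $\subrank_2(S) = \min\{\subrank_2(T), n_1 - 1, n_3\}$, which is the required value (with $m_1 = n_1-1, m_3 = n_3$). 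The case $i = 3$ is symmetric. The case $i = 1$ is the delicate one: here $S^{(1)}(\bbK^{(n_1-1)*}) = T^{(1)}(A^* \bbK^{(n_1-1)*})$, i.e.\ we are restricting the *indexing* space of the slice family to a generic hyperplane $A^*\bbK^{(n_1-1)*} \subset \bbK^{n_1*}$. We must show that a generic hyperplane still contains a vector whose image under $T^{(1)}$ has rank $\min\{\subrank_1(T), n_2, n_3\}$. Let $r = \subrank_1(T)$; the set $U = \{\xi \in \bbK^{n_1*} : \rk T^{(1)}(\xi) = r\}$ is Zariski open and nonempty, hence dense, and in particular not contained in any hyperplane, so a generic hyperplane meets $U$; this gives $\subrank_1(S) \geq \min\{r, n_2, n_3\}$, and the reverse inequality $\subrank_1(S) \le \min\{\subrank_1(T), n_2, n_3\}$ is immediate from monotonicity under restriction and the trivial bound. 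Combining, $\subrank_1(S) = \min\{\subrank_1(T), n_2, n_3\}$.

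Since the three conditions ``$\subrank_i(S)$ attains its prescribed value'' are each generic (each fails only on a proper closed subset of the space of choices of $A$), a single generic $A$ satisfies all of them simultaneously; this handles one coordinate cut, and induction on $\sum_i (n_i - m_i)$ finishes the proof.

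The main obstacle I expect is the bookkeeping in the $i = 1$ case: one has to argue that intersecting the ``high-rank'' open locus $U$ in the index space with a generic hyperplane is nonempty, and more subtly that after this cut the *other* two parameters $\subrank_2, \subrank_3$ are not accidentally damaged (they are only affected through the ambient mode-$1$ dimension, which here stays relevant via the $\min$ with $n_2, n_3$, not $n_1$). Making the three genericity conditions genuinely simultaneous — rather than proving each in isolation — is the point that needs care, though it follows formally from each being a dense open condition on the Grassmannian (or affine space) of restriction maps. This mirrors the argument of \cite[Proposition 3.1]{cgz}, so I would follow that template closely.
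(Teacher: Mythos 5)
Your proposal is correct and rests on the same mechanism as the paper's proof: semicontinuity of matrix rank, genericity of the restriction maps, and the fact that finitely many nonempty Zariski-open conditions can be satisfied simultaneously. The only difference is organizational — you cut one coordinate at a time by induction on $\sum_i(n_i-m_i)$, whereas the paper applies a single generic basis change in all three factors at once and then projects onto the $[m_1]\times[m_2]\times[m_3]$ corner, witnessing the lower bound on $\subrank_i(S)$ by the corner of one generically chosen slice — but this does not change the substance of the argument.
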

\begin{proof}
It is clear that $\subrank_i(S) \leq \min \{ \subrank_i(T), m_j, m_k\}$ for every choice of $i,j,k$. 

For every $i \in [3]$, let $A^{(i)}_1, \ldots, A^{(i)}_{n_i}$ be the $i$-slices of $T$, that is $A^{(i)}_j = T^{(i)}(e_j)$ for a fixed basis $e_1 \vvirg e_{n_i}$ of $\bbK^{n_i*}$. By definition, $T^{(i)}(\alpha) \in \langle A^{(i)}_1 \vvirg A^{(i)}_{n_i} \rangle$ for every $\alpha \in \bbK^{n_i*}$. We have $\rank(T^{(i)}(\alpha)) \leq \subrank_i(T)$ for every $\alpha$ and by semicontinuity of matrix rank equality holds on a Zariski open subset: its complement is given by the vanishing of size $(\subrank_i(T) + 1)$ minors of $T^{(i)}(\alpha)$ regarded as a matrix depending on (the coefficients of) $\alpha$. 

As a consequence, there is a non-empty (Zariski) open set of $\GL_{n_i}$ such that the matrices $B^{(i)}_1, \ldots, B^{(i)}_{n_i}$ obtained by taking any such linear combinations of $A^{(i)}_1, \ldots, A^{(i)}_{n_i}$ has the property that $B^{(i)}_1$ has rank equal to $\subrank_i(T)$: to see this it suffices to consider elements of $\GL_{n_i}$ whose first row, regarded as an element $\alpha \in \bbK^{n_i}$, satisfies $\rank(T^{(i)}(\alpha)) = \subrank_i(T)$.

Similarly, for every $i\in [3]$, there is a non-empty open set of column operations and row operations on $B^{(i)}_1$ such that the new matrix $C^{(i)}_1$ has the property that its submatrix $C^{(i)}_1|_{m_j \times m_k}$ has rank $\min\{\subrank_i(T), m_j, m_k\}$.

The intersection of finitely many non-empty Zariski open subset is Zariski open and dense. Hence, we can obtain the above properties simultaneously for every $i \in [3]$ by acting on $T$ with an operation from the intersection. After these operations, let $S$ be the subtensor obtained by projecting $T$ on the coordinates $[m_1] \times [m_2] \times [m_3]$. The tensor $S$ satisfies $\subrank_i(S) =  \min \{ \subrank_i(T), m_j, m_k\}$.
\end{proof}

\subsection{Degenerating to the trivial algebra}

Recall the tensor $\n^{(i)} \in \bbK^{3} \otimes \bbK^{3} \otimes \bbK^{3}$ for $i \in [3]$. We consider the analog tensor in higher dimension.
For any $n \in \bbN$, let $\n^{(3)}_n \in \bbK^n \otimes \bbK^n \otimes \bbK^n$ be defined by
\begin{align*}
\n^{(3)}_n &= e_1 \otimes e_1 \otimes e_1 + \sum_{i=2}^n (e_1 \otimes e_i \otimes e_i + e_i \otimes e_1 \otimes e_i)
\end{align*}
and let $\n^{(1)}_n, \n^{(2)}_n \in \bbK^n \otimes \bbK^n \otimes \bbK^n$ be obtained from $\n^{(3)}_n$ by cyclically shifting the tensor factors. Note that $\n^{(i)} = \n^{(i)}_3$. As a bilinear map, regarding the first and second tensor factors as ``inputs'' and the third factor as ``output'', $\n^{(3)}_n$ is the tensor encoding the multiplication map of the $n$-dimensional trivial unital algebra $\bbK[x_1 \vvirg x_{n-1}]/\bfm^2$ where $\bfm = (x_1 \vvirg x_{n-1})$ is the ideal generated by the variables.

We will be using the following result, which says that if the slice spans have maximum rank in two directions $i\neq j$, then the tensor degenerates to $\n_n^{(k)}$ in the third direction $k \neq i, k \neq j$.

\begin{lemma}\label{lem:binding-to-null}
    Let $T \in \bbK^{n} \otimes \bbK^{n} \otimes \bbK^{n}$. For every distinct $i,j,k \in [3]$, if $\subrank_i(T) = \subrank_j(T) = n$, then $T \degengeq \n^{(k)}_n$.
\end{lemma}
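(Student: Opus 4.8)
The plan is to read off the hypotheses in the language of bilinear maps, normalize $T$ to a unital (not necessarily associative) multiplication tensor, and then degenerate that tensor to $\n^{(k)}_n$ by an explicit diagonal action. By the cyclic symmetry of the family $\n^{(1)}_n,\n^{(2)}_n,\n^{(3)}_n$ and the symmetry of the hypothesis in $i$ and $j$, we may assume $(i,j,k)=(1,2,3)$. Regard $T$ as the bilinear map $\mu\colon \bbK^{n_1*}\times\bbK^{n_2*}\to\bbK^{n_3}$, $\mu(\alpha,\beta)=(\alpha\otimes\beta\otimes\id)T$. Then $\subrank_1(T)=n$ means there is $u_0$ with $L_{u_0}:=\mu(u_0,\cdot)\colon\bbK^{n_2*}\to\bbK^{n_3}$ invertible, and $\subrank_2(T)=n$ means there is $v_0$ with $R_{v_0}:=\mu(\cdot,v_0)\colon\bbK^{n_1*}\to\bbK^{n_3}$ invertible; in particular $u_0,v_0\neq 0$. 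Throughout we use that $\subrank_i$ is an invariant of the $\GL_{n_1}\times\GL_{n_2}\times\GL_{n_3}$-action (matrix rank is unchanged by composing with isomorphisms), so changes of basis preserve both the hypotheses and, for the conclusion, it suffices to produce \emph{some} equivalent tensor that degenerates to $\n^{(k)}_n$.

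The key step is to use $L_{u_0}$ and $R_{v_0}$ to identify \emph{both} $\bbK^{n_1*}$ and $\bbK^{n_2*}$ with the output space $\bbK^{n_3}$, which forces a two-sided unit. Concretely, put $e:=\mu(u_0,v_0)=L_{u_0}(v_0)=R_{v_0}(u_0)$, which is nonzero since $L_{u_0}$ is injective and $v_0\neq 0$, and define a new product $\bullet$ on $\bbK^{n_3}$ by $a\bullet b:=\mu(R_{v_0}^{-1}(a),L_{u_0}^{-1}(b))$. This is obtained from $T$ by an invertible change of basis in the first two tensor factors, so if $T_\bullet$ denotes its structure tensor then $T\geq T_\bullet$ and $T_\bullet\geq T$, and in particular $T\degengeq T_\bullet$. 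Since $R_{v_0}^{-1}(e)=u_0$ and $L_{u_0}^{-1}(e)=v_0$, one checks directly that $e\bullet b=L_{u_0}(L_{u_0}^{-1}(b))=b$ and $a\bullet e=R_{v_0}(R_{v_0}^{-1}(a))=a$, so $e$ is a two-sided unit for $\bullet$. After a further change of basis we take $e=e_1$, so that $T_\bullet=e_1\otimes e_1\otimes e_1+\sum_{i\geq 2}(e_1\otimes e_i\otimes e_i+e_i\otimes e_1\otimes e_i)+\sum_{i,j\geq 2}e_i\otimes e_j\otimes(e_i\bullet e_j)$; that is, $T_\bullet$ agrees with $\n^{(3)}_n$ except possibly in the slices indexed by $i,j\geq 2$. (This normalization is the analogue for not-necessarily-associative bilinear maps of the fact that a unital algebra is a binding tensor; one may alternatively invoke the result of Bläser–Lysikov on degenerating a unital algebra to the trivial one at this point.)

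It remains to degenerate $T_\bullet$ to $\n^{(3)}_n$ by killing the terms $e_i\otimes e_j\otimes(e_i\bullet e_j)$ with $i,j\geq 2$. Take $A(\eps)=B(\eps)=\diag(1,\eps,\dots,\eps)$ and $C(\eps)=\diag(1,\eps^{-1},\dots,\eps^{-1})$, whose entries are Laurent polynomials in $\eps$. Setting $\delta_p=0$ if $p=1$ and $\delta_p=1$ otherwise, the coefficient of $e_p\otimes e_q\otimes e_r$ in $(A(\eps)\otimes B(\eps)\otimes C(\eps))T_\bullet$ is that of $T_\bullet$ multiplied by $\eps^{\delta_p+\delta_q-\delta_r}$. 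By unitality, the only nonzero coefficients of $T_\bullet$ with $p=1$ or $q=1$ are those of $e_1\otimes e_1\otimes e_1$, of $e_1\otimes e_i\otimes e_i$, and of $e_i\otimes e_1\otimes e_i$ ($i\geq 2$), on which the exponent $\delta_p+\delta_q-\delta_r$ equals $0$; on every term with $p,q\geq 2$ it equals $1$ or $2$. Hence $(A(\eps)\otimes B(\eps)\otimes C(\eps))T_\bullet=\n^{(3)}_n+\eps S_1+\eps^2 S_2$ for some tensors $S_1,S_2$, i.e.\ $T_\bullet\degengeq\n^{(3)}_n$. Combining with $T\degengeq T_\bullet$ and transitivity of $\degengeq$ gives $T\degengeq\n^{(3)}_n=\n^{(k)}_n$.

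The only delicate point is the normalization in the second paragraph: one must straighten $T$ in directions $1$ and $2$ \emph{simultaneously} in a way that yields a genuine two-sided unit, not merely a left or a right unit — naively fixing an invertible slice in direction $1$ and scaling only gives $e_1$ as a left unit, and then the diagonal degeneration fails because the slices $e_i\otimes e_1\otimes(\cdot)$ need not be in $\n^{(3)}_n$-form. The double identification via $L_{u_0}$ and $R_{v_0}$ resolves this precisely because their values at $v_0$ and $u_0$ coincide; everything else is routine bookkeeping with the $\GL\times\GL\times\GL$-action and the invariance of $\subrank_i$ under it.
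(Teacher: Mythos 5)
Your proof is correct and follows essentially the same route as the paper: reduce to the case where the tensor is the multiplication tensor of a unital (not necessarily associative) bilinear map, then kill the non-unital part by a diagonal degeneration whose exponents vanish exactly on the support of $\n^{(3)}_n$. The only difference is that you prove the normalization step from scratch via the two-sided identification through $L_{u_0}$ and $R_{v_0}$, whereas the paper simply cites it as Bl\"aser--Lysikov's Lemma 3.5; your weight choice $(\delta_p+\delta_q-\delta_r)$ is an equivalent variant of the paper's $(-2,1,\dots,1)/(4,1,\dots,1)$ scaling.
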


\autoref{lem:binding-to-null} follows directly from combining the following two lemmas.

\begin{lemma}[Bl\"aser--Lysikov {\cite[Lemma 3.5]{DBLP:conf/mfcs/BlaserL16}}]
    Let $T \in \bbK^{n} \otimes \bbK^{n} \otimes \bbK^{n}$.
    If $\subrank_1(T) = \subrank_2(T) = n$, then $T$ is isomorphic to the multiplication tensor of an $n$-dimensional unital algebra. In particular, 
    \[
    T \simeq \n^{(3)}_n + \sum_{a=2}^n \sum_{b=2}^n \sum_{c=1}^n T_{a,b,c}\, e_a \otimes e_b \otimes e_c
    \]
    for some $T_{a,b,c} \in \bbK$.
\end{lemma}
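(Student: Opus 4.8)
The statement to prove is the Bläser--Lysikov lemma: if $T \in \bbK^n \otimes \bbK^n \otimes \bbK^n$ has $\subrank_1(T) = \subrank_2(T) = n$ (i.e., the slice span in directions 1 and 2 both contain an invertible matrix), then $T$ is isomorphic to the structure tensor of an $n$-dimensional unital algebra, and in particular can be normalized to the displayed form. Since this is an already-published lemma being cited, the cleanest route is to reproduce the short structural argument rather than invent a new one. Let me outline how I would carry it out.

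I will write $T$ via its first flattening $T^{(1)}\colon \bbK^{n*} \to \Mat_n$, whose image is the space of $1$-slices. By hypothesis some $1$-slice is invertible; after acting on the third tensor factor by the inverse of that invertible matrix, I may assume the slice $T^{(1)}(e_1)$ (for a suitable basis vector $e_1$ of the first space, chosen so that this slice is the invertible one) equals the identity matrix $I_n$. Similarly, using $\subrank_2(T) = n$, some $2$-slice is invertible; acting on a suitable factor I can arrange that a chosen $2$-slice is also the identity. Concretely, once the first factor is fixed so that $T^{(1)}(e_1) = I$, the remaining freedom is to act simultaneously on the second and third factors by $(g, g^{-\mathsf T})$-type transformations preserving $T^{(1)}(e_1) = I$; using that an appropriate combination of the $2$-slices is invertible, I normalize so that the second flattening also sends a basis vector to $I$. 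After these two normalizations, the bilinear map $*\colon \bbK^n \times \bbK^n \to \bbK^n$ defined by $T$ (first two factors as inputs, third as output) has a two-sided identity element, namely $e_1$: the normalization $T^{(1)}(e_1) = I$ says $e_1 * y = y$ for all $y$, and the second normalization says $x * e_1 = x$ for all $x$. Hence $T$ is the structure tensor of a (not necessarily associative) unital algebra with unit $e_1$.

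**Extracting the normal form.** Given that $e_1$ is a two-sided unit, I extract the displayed normal form as follows. Write $T = \sum_{a,b} e_a \otimes e_b \otimes (e_a * e_b)$. The unit conditions give $e_1 * e_b = e_b$ and $e_a * e_1 = e_a$, so the terms with $a = 1$ contribute $\sum_b e_1 \otimes e_b \otimes e_b$ and the terms with $b = 1$, $a \neq 1$ contribute $\sum_{a \geq 2} e_a \otimes e_1 \otimes e_a$. Together these give exactly $e_1 \otimes e_1 \otimes e_1 + \sum_{b\geq 2}(e_1 \otimes e_b \otimes e_b) + \sum_{a \geq 2}(e_a \otimes e_1 \otimes e_a) = \n^{(3)}_n$ (matching the definition in the excerpt, with the $(1,1,1)$ term counted once). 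All remaining terms have both $a \geq 2$ and $b \geq 2$, and are of the form $T_{a,b,c}\, e_a \otimes e_b \otimes e_c$ with $c$ ranging over all of $[n]$ — which is precisely the claimed expression $T \simeq \n^{(3)}_n + \sum_{a=2}^n\sum_{b=2}^n\sum_{c=1}^n T_{a,b,c}\, e_a \otimes e_b \otimes e_c$.

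**Main obstacle.** The only genuinely delicate point is the simultaneous normalization: after fixing $T^{(1)}(e_1) = I$ I must check that the residual symmetry group acting on the second and third factors still contains an element conjugating some invertible $2$-slice to the identity, i.e., that the two normalizations do not conflict. The clean way to see this is: once $e_1*y = y$ for all $y$, the set of $2$-slices is $\{ L_x : x \in \bbK^n\}$ where $L_x(y) = x*y$, and $\subrank_2(T) = n$ means $L_{x_0}$ is invertible for some $x_0$; choosing a new basis of the first space whose first vector is $x_0$ and replacing $T$ by its image under $L_{x_0}^{-1}$ on the third factor (this does not disturb the existence of a unit for left multiplication, only changes which element it is — one re-examines the argument to confirm $x_0$ becomes a right unit and that a left unit persists, which follows from invertibility of $L_{x_0}$ and a short computation exactly as in Bläser--Lysikov) yields the two-sided unit. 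I would present this bookkeeping carefully but briefly, citing \cite{DBLP:conf/mfcs/BlaserL16} for the details, since the lemma statement in the excerpt is quoted verbatim from there.
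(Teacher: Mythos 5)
The paper itself does not prove this lemma; it is quoted directly from Bl\"aser--Lysikov, so there is no in-paper argument to compare against. Your overall strategy is the standard (and correct) one: use $\subrank_1(T)=n$ to normalize an invertible $1$-slice to the identity, making some $e_1$ a left unit for the induced product $*$; then use $\subrank_2(T)=n$ to produce a two-sided unit; and your extraction of the normal form from the existence of a two-sided unit (the $a=1$ and $b=1$ terms assembling into $\n^{(3)}_n$, the rest having $a,b\geq 2$) is correct as written.

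The one step that actually carries the content of the lemma --- making the two normalizations compatible so that a \emph{single} element is both a left and a right unit --- is the step you leave incomplete, and your description of it contains a slip: after arranging $e_1 * y = y$, the $2$-slices are the \emph{right} multiplications $R_y\colon x \mapsto x*y$, not the left multiplications $L_x$ (those are the $1$-slices), so the hypothesis $\subrank_2(T)=n$ gives you an invertible $R_{y_0}$, and the "short computation" you defer to the citation is exactly the verification that the two units can be merged. It can be done in two lines, staying within the residual symmetry you correctly identified: act on the second and third factors by $(R_{y_0},\,R_{y_0}^{-1})$, which sends $L_x$ to $R_{y_0}^{-1}L_xR_{y_0}$ and $R_y$ to $R_{y_0}^{-1}R_{R_{y_0}y}$; then $L_{e_1}=I$ is preserved, and since $R_{y_0}e_1 = e_1*y_0 = y_0$ one gets $R'_{e_1}=R_{y_0}^{-1}R_{y_0}=I$, so $e_1$ becomes a two-sided unit. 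With that computation inserted (and the $L$/$R$ labels corrected), your proof is complete and self-contained rather than resting on the reference for its key step.
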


\begin{lemma}
    The multiplication tensor of an $n$-dimensional unital algebra degenerates to $\n^{(3)}_n$. More precisely, for every $n \in \bbN$ and every $T_{a,b,c} \in \bbK$,
    \[
    \n^{(3)}_n + \sum_{a=2}^n \sum_{b=2}^n \sum_{c=1}^n T_{a,b,c}\, e_a \otimes e_b \otimes e_c \; \degengeq \; \n^{(3)}_n.
    \]
\end{lemma}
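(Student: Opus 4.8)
The plan is to exhibit an explicit one-parameter family of diagonal maps. Write $T = \n^{(3)}_n + R$, where
\[
R = \sum_{a=2}^n \sum_{b=2}^n \sum_{c=1}^n T_{a,b,c}\, e_a \otimes e_b \otimes e_c
\]
collects the ``higher-order'' monomials; note that every monomial of $R$ is supported on indices with $a \geq 2$ and $b \geq 2$ in the first two factors. In each tensor factor we fix the standard basis $e_1, \dots, e_n$ and use diagonal linear maps of the form $\diag(\eps^{d_1}, \dots, \eps^{d_n})$, whose action on a monomial $e_a \otimes e_b \otimes e_c$ simply multiplies it by $\eps$ raised to the sum of the three corresponding exponents.

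Concretely, I would set
\[
A(\eps) = \diag(1, \eps, \dots, \eps), \qquad B(\eps) = \diag(1, \eps, \dots, \eps), \qquad C(\eps) = \diag(1, \eps^{-1}, \dots, \eps^{-1}),
\]
so that $A(\eps), B(\eps)$ have polynomial entries and $C(\eps)$ has Laurent-polynomial entries, as required. The exponents are chosen so that every monomial of $\n^{(3)}_n$ has total exponent $0$: the monomial $e_1 \otimes e_1 \otimes e_1$ gets exponent $0+0+0$, each $e_1 \otimes e_i \otimes e_i$ with $i\ge 2$ gets $0 + 1 + (-1) = 0$, and each $e_i \otimes e_1 \otimes e_i$ with $i\ge 2$ gets $1 + 0 + (-1) = 0$. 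Hence $(A(\eps) \otimes B(\eps) \otimes C(\eps))\,\n^{(3)}_n = \n^{(3)}_n$ identically in $\eps$.

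On the other hand, every monomial $e_a \otimes e_b \otimes e_c$ occurring in $R$ has $a,b \geq 2$, so its total exponent equals $1 + 1 + \gamma_c$, where $\gamma_c = 0$ if $c=1$ and $\gamma_c = -1$ if $c \geq 2$; in either case this is at least $1$. Therefore $(A(\eps) \otimes B(\eps) \otimes C(\eps))\, R = \eps R_1 + \eps^2 R_2$ for suitable tensors $R_1, R_2$. Adding the two computations yields
\[
(A(\eps) \otimes B(\eps) \otimes C(\eps))\, T = \n^{(3)}_n + \eps R_1 + \eps^2 R_2,
\]
which is exactly the assertion $T \degengeq \n^{(3)}_n$. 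There is no genuine obstacle here: the whole argument is the bookkeeping of exponents, and the only point one must get right is choosing them so that $\n^{(3)}_n$ is fixed on the nose while the error block $R$ is pushed into strictly positive $\eps$-order. In particular the conclusion never uses that the scalars $T_{a,b,c}$ come from an associative or unital algebra — any choice works.
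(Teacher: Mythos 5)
Your proof is correct and takes essentially the same approach as the paper: a diagonal $\eps$-degeneration whose exponents sum to zero on every monomial of $\n^{(3)}_n$ and are strictly positive on every monomial of the error term (the paper uses the exponent vectors $(-2,1,\dots,1)$, $(-2,1,\dots,1)$, $(4,1,\dots,1)$ instead of yours, but the choice is immaterial). No further comment needed.
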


\begin{proof}
Let $A(\eps) = B(\eps)$ map $e_1 \mapsto \eps^{-2} e_1$ and $e_j \mapsto \eps e_j$ for $j \geq 2$.
Let $C(\eps)$ map $e_1 \mapsto \eps^{4} e_1$ and $e_j \mapsto \eps e_j$ for $j \geq 2$.
Then 
\[
(A(\eps) \otimes B(\eps) \otimes C(\eps)) \Bigl(\n^{(3)}_n + \sum_{a=2}^n \sum_{b=2}^n \sum_{c=1}^n S_{a,b,c}\, e_a \otimes e_b \otimes e_c\Bigr) = \n^{(3)}_n + \eps^3 U_1 + \eps^6 U_2
\]
for some tensors $U_1, U_2$.
\end{proof}

\subsection{Classification of matrix subspaces of small rank}

The final ingredient is a classification of matrix subspaces of small rank. Let $\mathcal{A}\subseteq \bbK^{n_1\times n_2}$ and $\mathcal{B} \subseteq \bbK^{m_1 \times m_2}$ be matrix subspaces. We say $\mathcal{A}$ and $\mathcal{B}$ are \emph{equivalent} if $\mathcal{A}$ can be obtained from $\mathcal{B}$ by simultaneous invertible row and column operations and adding or removing any number of zero rows or columns. The classification of linear subspaces of small rank is the subject of a long line of research in linear algebra and algebraic geometry. In this section, we only need the classification for rank $2$, which dates back to \cite{AtkLlo:LargeSpaces,MR954659}. In \cite{MR695915}, the classification for rank at most three was obtained, whereas the classification for rank at most four was only recently obtained in \cite{HuLan:LinearSpacesMatrices}.

The classification uses the space of $3 \times 3$ skew-symmetric matrices, which is the space
\[
\left\{\begin{pmatrix}
    0 & a & b\\
    -a & 0 & c\\
    -b & -c & 0
\end{pmatrix} : a,b,c \in \bbK \right\}
\]
Note that the slice span, in each of the three directions, of the tensor $\deter$ defined earlier is equal to the space of $3 \times 3$ skew-symmetric matrices.

\begin{lemma}[Atkinson \cite{MR695915}, Eisenbud--Harris \cite{MR954659}]\label{lem:classification}
Let $\mathcal{A}$ be a matrix subspace.
\begin{itemize}
    \item If $\rank(\mathcal{A}) = 1$, then up to equivalence, $\mathcal{A}$ is supported in a single row or column.
    \item If $\rank(\mathcal{A}) = 2$, then up to equivalence, $\mathcal{A}$ is supported in two rows, or two columns, or a row and a column, or $\mathcal{A}$ is equivalent to the space of $3\times 3$ skew-symmetric matrices.
\end{itemize}
\end{lemma}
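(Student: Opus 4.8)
The plan is to prove the two bullets separately; write $\calA\subseteq\bbK^{p\times q}$ and recall that here $\rank(\calA)$ denotes $\maxrank(\calA)$. \emph{The rank-one case.} Suppose every matrix of $\calA$ has rank at most one. If $\dim\calA\le 1$ this is clear: a nonzero rank-one matrix is an outer product $uv^{\mathsf T}$, which invertible row and column operations carry to $e_1e_1^{\mathsf T}$, supported in one row (equivalently in one column). If $\dim\calA\ge 2$, choose linearly independent $A_1=u_1v_1^{\mathsf T}$, $A_2=u_2v_2^{\mathsf T}\in\calA$. As $A_1+A_2\in\calA$ has rank $\le 1$, either $u_1\parallel u_2$ or $v_1\parallel v_2$; say $u_1,u_2$ are both proportional to a fixed $u$ (the other alternative is symmetric and yields ``a single column''), so that $v_1,v_2$ are linearly independent. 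For any further $A=u'{v'}^{\mathsf T}\in\calA$, applying this dichotomy to $A_1+A$ and to $A_2+A$ shows that unless $u'\parallel u$ the vector $v'$ must be proportional to both $v_1$ and $v_2$, which is impossible. Hence every element of $\calA$ has column space in $\langle u\rangle$, and after the row operation taking $u$ to $e_1$, the space $\calA$ is supported in the first row.

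\emph{The rank-two case: normal form and constraints.} Now assume $\maxrank(\calA)=2$ and fix $A_0\in\calA$ of rank exactly $2$. Invertible row and column operations put it in the block form $A_0=\bigl(\begin{smallmatrix}I_2&0\\0&0\end{smallmatrix}\bigr)$ with row blocks of sizes $2,p-2$ and column blocks of sizes $2,q-2$; write each $B\in\calA$ as $B=\bigl(\begin{smallmatrix}P_B&Q_B\\R_B&S_B\end{smallmatrix}\bigr)$ in the same decomposition. For all but finitely many scalars $t$ the block $I_2+tP_B$ is invertible, and then the Schur complement identity gives $\rank(A_0+tB)=2+\rank\bigl(tS_B-t^2R_B(I_2+tP_B)^{-1}Q_B\bigr)$. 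Since $A_0+tB\in\calA$ has rank $\le 2$, the matrix $tS_B-t^2R_B(I_2+tP_B)^{-1}Q_B$ vanishes for cofinitely many $t$, hence (as $\bbK$ is infinite) is identically zero as a rational function of $t$; substituting the expansion $(I_2+tP_B)^{-1}=\sum_{k\ge 0}(-t)^kP_B^k$ and comparing coefficients yields $S_B=0$ and $R_BP_B^{\,k}Q_B=0$ for every $k\ge 0$. Replacing $B$ by $B+B'$ and subtracting off the pieces for $B$ and $B'$ gives the bilinear relations $R_BQ_{B'}+R_{B'}Q_B=0$ for all $B,B'\in\calA$, together with trilinear identities coming from the $k=1$ relation.

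\emph{The case analysis.} Put $\calR=\{R_B:B\in\calA\}$ and $\calQ=\{Q_B:B\in\calA\}$. If $\calR=0$ every $B\in\calA$ is supported in the first two rows, so after deleting the zero rows $\calA$ is supported in two rows; symmetrically $\calQ=0$ gives two columns --- the first two cases. The content of the lemma is the case $\calR\ne 0$ and $\calQ\ne 0$, where from $R_BQ_{B'}=-R_{B'}Q_B$ and the trilinear identities one must deduce that $\calA$ is, up to equivalence, supported in a row and a column, or equal to the $3\times3$ skew-symmetric matrices. As an indication of the mechanism: if some $B$ has $R_B\ne 0$ and $Q_B=0$ then $R_BQ=0$ for every $Q\in\calQ$, so every $Q\in\calQ$ has image inside the line $\ker R_B$, and a dual remark constrains the $R$'s, pushing $\calA$ into the ``row and a column'' shape; it is the complementary, ``nondegenerate'' configuration that is realized by the skew-symmetric space (whose slice spans in all three directions equal that $3\times3$ space, as noted before the lemma). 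Establishing this last step --- that the $3\times3$ skew-symmetric space is the only space of bounded rank $2$ that is not a block-triangular ``compression space'' of one of the three listed shapes --- is the main obstacle; it is exactly the classification of Atkinson~\cite{MR695915} (by an explicit case analysis) and of Eisenbud--Harris~\cite{MR954659} (via vector bundles), whose arguments I would invoke for this case rather than reprove.
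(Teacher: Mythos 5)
The paper does not prove this lemma at all: it is quoted as a known classification, attributed to Atkinson and Eisenbud--Harris, so there is no internal proof to compare yours against. Judged on its own terms, your rank-one argument is complete and correct, and your rank-two setup is the standard (Atkinson-style) one: normalize a rank-$2$ element to $\bigl(\begin{smallmatrix}I_2&0\\0&0\end{smallmatrix}\bigr)$, use the Schur complement of $A_0+tB$ to derive $S_B=0$ and $R_BP_B^{\,k}Q_B=0$, and polarize to get $R_BQ_{B'}+R_{B'}Q_B=0$. All of that is sound ($\bbK$ is algebraically closed, hence infinite, so the ``cofinitely many $t$'' step is legitimate), and the $\calR=0$ and $\calQ=0$ cases do give the two-rows and two-columns conclusions.

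The genuine gap is exactly where you flag it: the case $\calR\neq 0$ and $\calQ\neq 0$. The identities you have extracted do not by themselves force the dichotomy ``a row and a column'' versus ``$3\times 3$ skew-symmetric''; one still has to analyze the alternating matrix-valued pairing $(B,B')\mapsto R_BQ_{B'}$ together with the $k\geq 1$ identities, show that in the degenerate configurations $\calA$ compresses into a row and a column, and show that the only nondegenerate configuration has $p=q=3$ with $\calA$ conjugate to $\Lambda^2\bbK^3$ --- in particular, nothing you have proved so far excludes the existence of some other exceptional space. That final step is the entire content of the classification, and you explicitly defer it to \cite{MR695915,MR954659}. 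So your proposal is an honest reduction to the cited theorem rather than a proof of it; this is defensible (the paper itself only cites the result), but as a standalone proof it is incomplete at the decisive point.
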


\section{Proof of \autoref{thm:new}}
We now give the proof of \autoref{thm:new}.
Let $T \in \bbK^{n_1} \otimes \bbK^{n_2} \otimes \bbK^{n_3}$ be nonzero. Let $r_i = \tensorrank_i(T)$ and $q_i = \subrank_i(T)$ for $i \in [3]$.

\begin{lemma}\label{lem:q-lower-bound}
    If $r_1, r_2, r_3 \geq 3$, then $q_1, q_2, q_3 \geq 2$.
\end{lemma}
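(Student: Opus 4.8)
The statement is: if all three flattening ranks of $T$ are at least $3$, then all three slice-span maximal ranks $q_i = \subrank_i(T)$ are at least $2$. The natural strategy is contrapositive: suppose some $q_i \le 1$, say $q_1 = \subrank_1(T) \le 1$, and deduce that some flattening rank $r_j$ is at most $2$. The hypothesis $\subrank_1(T) \le 1$ says that every matrix in the slice span $T^{(1)}(\bbK^{n_1*}) \subseteq \bbK^{n_2 \times n_3}$ has rank at most $1$; equivalently, the $1$-flattening $T^{(1)}$, viewed as a space of $n_2 \times n_3$ matrices, is a rank-$\le 1$ matrix subspace. Now invoke the rank-$1$ case of \autoref{lem:classification}: up to row and column operations (which do not change any flattening rank), this subspace is supported in a single row or a single column.

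Suppose it is supported in a single column of the $n_2 \times n_3$ matrices, i.e. there is a basis of $\bbK^{n_3}$ in which $T = v \otimes (\text{something in } \bbK^{n_1} \otimes \bbK^{n_2})$ sits entirely in $e_1$ of the third factor — more precisely, $T \in \bbK^{n_1} \otimes \bbK^{n_2} \otimes \langle e_1 \rangle$. Then the third flattening $T^{(3)}$ has image contained in a $1$-dimensional space, so $r_3 = \tensorrank_3(T) \le 1 < 3$, contradicting the hypothesis. Symmetrically, if the subspace is supported in a single row of the $n_2 \times n_3$ matrices, then $T \in \bbK^{n_1} \otimes \langle e_1 \rangle \otimes \bbK^{n_3}$, so $r_2 \le 1 < 3$, again a contradiction. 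Hence $q_1 \ge 2$, and by the same argument applied to the other two directions, $q_2, q_3 \ge 2$ as well.

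**Main obstacle.** The only delicate point is pinning down exactly what "$\subrank_1(T) \le 1$ forces $T$ into a coordinate slab" means in terms of the two remaining flattenings, and making sure the row/column operations used to normalize the matrix subspace are genuine restrictions of $T$ (they are: row operations on the $n_2 \times n_3$ matrices correspond to a $\GL_{n_2}$ action on the second factor, column operations to a $\GL_{n_3}$ action on the third factor, and "adding or removing zero rows/columns" only shrinks dimensions, which is harmless for flattening ranks since we are bounding them above). Everything else is a direct unwinding of definitions, so I expect the proof to be short; the one thing to be careful about is bookkeeping which of $r_2$ or $r_3$ drops in each of the two sub-cases of \autoref{lem:classification}.
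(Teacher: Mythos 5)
Your proof is correct and follows essentially the same route as the paper's: contrapositive, the rank-one case of \autoref{lem:classification}, and the observation that a slice span supported in a single row or column forces one of the other two flattening ranks down to $1$. The paper's version is just a terser statement of the same argument, so no further comment is needed.
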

\begin{proof}
    If $q_i = 1$ for some $i \in [3]$, then the the span of the $i$-slices is supported on a single row or column by \autoref{lem:classification}. This implies that $r_j = 1$ for some $j \in [3]$.
\end{proof}

We consider two cases:
\begin{itemize}
    \item $q_i, q_j \geq 3$ for some distinct $i,j \in [3]$
    \item $q_i,q_j = 2$ for some distinct $i,j \in [3]$
\end{itemize}

\begin{lemma}\label{lem:N}
    Suppose $r_1, r_2, r_3 \geq 3$. Let $i,j,k \in [3]$ be distinct. If $q_i, q_j \geq 3$, then  $T \degengeq \n^{(k)}$.
\end{lemma}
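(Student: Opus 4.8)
The plan is to reduce to the $3\times 3\times 3$ case and then invoke \autoref{lem:binding-to-null}. First I would apply \autoref{lem:restr-Qi} to $T$ with $m_1 = m_2 = m_3 = 3$, obtaining a tensor $S \in \bbK^{3} \otimes \bbK^{3} \otimes \bbK^{3}$ with $T \geq S$ and
\[
\subrank_\ell(S) = \min\{\subrank_\ell(T), 3, 3\} = \min\{q_\ell, 3\}
\]
for every $\ell \in [3]$. Since by hypothesis $q_i, q_j \geq 3$, this yields $\subrank_i(S) = \subrank_j(S) = 3$. (Note that $q_i \geq 3$ already forces the relevant dimensions of $T$ to be at least $3$, since $\subrank_i(T) \leq \min\{n_j, n_k\}$; the assumption $r_1, r_2, r_3 \geq 3$ is not strictly needed for this lemma, but is kept to align with the surrounding case analysis.)

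Next I would apply \autoref{lem:binding-to-null} to $S$ in dimension $n = 3$: since $\subrank_i(S) = \subrank_j(S) = 3$, we conclude $S \degengeq \n^{(k)}_3$, where $k$ is the remaining index. Because $\n^{(k)}_3 = \n^{(k)}$, this is exactly $S \degengeq \n^{(k)}$.

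Finally, a restriction is a special (constant-coefficient) degeneration, so $T \geq S$ gives $T \degengeq S$; combining with $S \degengeq \n^{(k)}$ and transitivity of degeneration yields $T \degengeq \n^{(k)}$, as claimed. I do not expect any real obstacle here, since all the substance is packaged into \autoref{lem:restr-Qi} and \autoref{lem:binding-to-null}; the only points requiring care are the bookkeeping of the three directions — making sure the index $k$ omitted when applying \autoref{lem:restr-Qi} and \autoref{lem:binding-to-null} is precisely the one in the conclusion $T \degengeq \n^{(k)}$ — and the (standard) observation that restriction implies degeneration.
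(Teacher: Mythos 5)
Your proof is correct and is essentially identical to the paper's: both apply \autoref{lem:restr-Qi} with $m_1=m_2=m_3=3$ to produce $S\in\bbK^3\otimes\bbK^3\otimes\bbK^3$ with $\subrank_i(S)=\subrank_j(S)=3$ and then invoke \autoref{lem:binding-to-null}. Your added remarks (that restriction implies degeneration, and that $q_i,q_j\geq 3$ already forces the ambient dimensions to be at least $3$) are accurate but do not change the argument.
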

\begin{proof}
    Suppose $q_1, q_2 \geq 3$. By \autoref{lem:restr-Qi}, there is a tensor $S \in \bbK^{3} \otimes \bbK^{3} \otimes \bbK^{3}$ such that $T \geq S$ and
    $\subrank_1(S) = \min \{q_1, 3\} = 3$ and $\subrank_2(S) = \min \{q_2, 3\} = 3$. Then $S \degengeq \n^{(3)}$ by \autoref{lem:binding-to-null}.
\end{proof}

\begin{lemma}\label{lem:D}
    Suppose $r_1, r_2, r_3 \geq 3$. Let $i,j,k \in [3]$ be distinct. If $q_i, q_j = 2$, then $q_k = 2$ and $T$ is isomorphic to $\deter$.
\end{lemma}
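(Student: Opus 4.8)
The plan is to use \autoref{lem:restr-Qi} and \autoref{lem:classification} to pin down the structure of the slice spans when two of them have rank exactly $2$, and then show that only the skew-symmetric case is consistent with the hypothesis that \emph{all three} flattening ranks are at least $3$.

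First I would apply \autoref{lem:restr-Qi} with $m_1=m_2=m_3=n$ (or pass to a suitable bounded-dimension restriction) to reduce to working with the slice spans $\calA_1, \calA_2, \calA_3$ of $T$ in the three directions, where $\maxrank(\calA_i) = q_i$. Given $q_i = q_j = 2$, I would invoke \autoref{lem:classification} for $\calA_i$ and $\calA_j$: each is, up to equivalence, supported in two rows, two columns, a row and a column, or equivalent to the $3\times3$ skew-symmetric space. The key observation is that if $\calA_i$ is supported in two rows or two columns or a row-plus-column, then the corresponding flattening $T^{(j)}$ or $T^{(k)}$ — obtained by regrouping the same tensor — is forced to have small rank: a slice span concentrated on, say, two rows means that one of the other flattenings has rank at most $2$ (indeed the ``two rows'' case of one direction translates into a rank-$\le 2$ flattening in another direction, because the entire tensor lives in $\bbK^{n_i}\otimes\bbK^2\otimes\bbK^{n_k}$ after the equivalence). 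This would contradict $r_1,r_2,r_3\ge 3$. Hence both $\calA_i$ and $\calA_j$ must be equivalent to the space of $3\times3$ skew-symmetric matrices; in particular $n_i, n_j$ effectively reduce to $3$ and $q_i=q_j=2$ forces the relevant coordinates to be exactly $3$-dimensional.

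Next, knowing $\calA_i$ and $\calA_j$ are the skew-symmetric space, I would argue that $T$ itself must be (equivalent to) $\deter$. The point is that a tensor in $\bbK^3\otimes\bbK^3\otimes\bbK^3$ whose slice span in one direction is exactly the $3\times3$ skew-symmetric space, with all flattenings of rank $3$, is determined: writing $T$ in terms of the skew-symmetric slices, the condition that a second direction's slice span is also skew-symmetric is very rigid, and a short direct computation (or a symmetry/GL-orbit argument, since the skew-symmetric space has a transitive-enough stabilizer) shows $T = \deter$ up to the equivalence and hence $T$ is isomorphic to $\deter$. Since all three slice spans of $\deter$ are the skew-symmetric space, this also yields $q_k = 2$, completing the statement.

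The main obstacle I expect is the step translating ``$\calA_i$ supported in two rows/columns/row+column'' into a rank bound on \emph{another} flattening — one must be careful that the equivalence used in \autoref{lem:classification} (simultaneous row/column operations plus adding/removing zero rows and columns) corresponds to legitimate $\GL$-actions on two of the three tensor factors, so that it genuinely constrains the other flattenings, and one must handle the ``row and a column'' case, where the tensor decomposes as a sum of two pieces each of which is degenerate in a complementary way. A secondary subtlety is ruling out the possibility that $\calA_i$ is skew-symmetric while $\calA_j$ is of one of the "thin" types: here one uses that the skew-symmetric structure in direction $i$ already forces $T$ to have a specific shape in the other two factors, which is incompatible with $\calA_j$ being thin once $r_k\ge 3$ is imposed. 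Once the rigidity is set up correctly, the identification with $\deter$ and the conclusion $q_k=2$ are routine.
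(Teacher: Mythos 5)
Your overall strategy (classify $\calA_i$ via \autoref{lem:classification}, eliminate the ``thin'' cases using $r_1,r_2,r_3\ge 3$, and land in the skew-symmetric case) matches the paper's. But your ``key observation'' --- that a slice span supported in two rows, two columns, \emph{or a row and a column} forces another flattening to have rank at most $2$ --- is false in the row-and-column case, and that is exactly the case that carries all the difficulty. The tensor $\n^{(1)}$ is a counterexample: its $1$-slice span is supported on the first row and first column, $q_1(\n^{(1)})=2$, yet all three flattenings have rank $3$ (and $q_2=q_3=3$). So the row-and-column configuration is perfectly compatible with $r_1,r_2,r_3\ge 3$, and no contradiction can be extracted from the flattening ranks alone. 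You flag this case as an ``obstacle'' but then assert it is handled by the same mechanism as the other thin cases, which it is not.

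The contradiction in the row-and-column case must come from the \emph{second} hypothesis $q_j=2$, not from the flattening ranks. The paper's argument: after normalizing $\calA_1$ to be supported on the first row and first column with entries $\ell_{bc}$, one uses $r_3\ge 3$ to arrange that a generic combination $\ell'$ of the first-column entries is independent of $\ell_{12},\dots,\ell_{1r_3}$, then changes bases so that $\ell_{j1}=e_j$; a direct computation then exhibits a matrix in $\calA_2$ containing $e_1$ in each diagonal position of an $r_1\times r_1$ block, hence of rank at least $3$ when $r_1\ge 3$, contradicting $q_2=2$. Your proposal is missing this computation (or any substitute for it), and your secondary worry about ruling out ``$\calA_i$ skew-symmetric while $\calA_j$ thin'' is a non-issue once the thin cases are correctly eliminated, since the paper's case split is simply ``some $\calA_i$ skew-symmetric'' versus ``none is.'' The remaining identification of $T$ with $\deter$ once a slice span is the skew-symmetric space is asserted at a similar level of detail in both your write-up and the paper, so that part is not where the gap lies.
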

\begin{proof}
    For $i=1,2,3$, let $\mathcal{A}_i = T^{(i)}(\bbK^{n_i*})$ be the image of the $i$-th flattening of $T$. If any of these is equivalent to the subspace of $3 \times 3$ skew-symmetric matrices then all three are and~$T$ is equivalent to $\deter$. 
    
    Suppose none of these is equivalent to the subspace of $3 \times 3$ skew-symmetric matrices. We will deduce a contradiction. Suppose for simplicity of notation that $q_1 = q_2 = 2$. We apply \autoref{lem:classification} to the space $\calA_1$. Since $r_2, r_3 \geq 3$, $\calA_1$ is not equivalent to a space supported on only two rows or two columns. Therefore, after changing coordinates, we may assume it is supported on the first row and the first column. 
In other words
\[
\calA_1 = \left\{ \left( \begin{array}{cccc} 
\ell_{11} & \ell_{12} & \cdots & \ell_{1r_3} \\
\ell_{21} & & & \\
\vdots & & & \\
\ell_{r_21} & & & 
\end{array}
\right) \in \bbK^{r_2} \otimes \bbK^{r_3} : \ell_{bc} \in \bbK^{r_1}\right\}.
\]
If $\ell'$ is a generic linear combination of $\ell_{11} ,\ell_{21} \vvirg \ell_{r_2 1}$ then $\ell' , \ell_{12} \vvirg \ell_{1r_3}$ are linearly independent, otherwise the rank of the third flattening of $T$ would be smaller than $r_3$. After possibly acting on the second tensor factor of $T$, or equivalently performing row operations on $\calA_1$, assume $\ell_{11} = \ell'$ and applying a linear transformation on the first tensor factor we may assume $\ell_{j1} = e_j$. In this case, the space $\calA_2$ turns out to be
\[
\calA_2 = \left\{  \left( \begin{array}{cccc} e_1 & z_{12} & \cdots & z_{1r_3} \\
& e_1 & & \\
& & \ddots &  \\
& & & e_1 \\
\end{array}\right) \in \bbK^{r_1} \otimes \bbK^{r_3} : z_{1c} \in \bbK^{r_2} \right\}.
\]
If $r_1 \geq 3$, then $\calA_2$ contains a matrix of rank at least $3$, in contradiction with the condition $q_2 \leq 2$. This provides a contradiction and concludes the proof.
\end{proof}

\begin{proof}[Proof of \autoref{thm:new}]
    Suppose $r_i \leq 2$ for some $i \in [3]$. Then we must be in case (a), (b) or (c) by  \autoref{thm:prev}. 
    Suppose $r_i \geq 3$ for all $i \in [3]$. Then we cannot be in case (a), (b) or (c).
    From \autoref{lem:q-lower-bound}, \autoref{lem:N} and \autoref{lem:D} follows that (d) must hold.
\end{proof}

\section{Open problems}

\begin{enumerate}
    \item \autoref{th:new-values} says that the smallest possible values of the asymptotic subrank are $0, 1, 1.88988, 2, 2.68664$. We know that the set of possible values $\geq 2.68664$ is also discrete \cite{briet2023discreteness}. What is the next smallest value?
    As a candidate, we know that there exists a tensor $T$ with $\aQ(T) \approx 2.7551$ \cite[page~132]{strassen1991degeneration}.
    \item What general structure is there in the gaps in the asymptotic subrank? For every natural number $n \in \bbN$ what is the smallest (largest) value that the asymptotic subrank takes that is strictly larger (smaller) than $n$?
\end{enumerate}

\paragraph{Acknowledgements.}
  We thank the organisers of the Workshop on Algebraic Complexity Theory (WACT) 2023 at the University of Warwick, where this project was conceived.
  J.Z.\ was supported by NWO Veni grant VI.Veni.212.284.

\bibliographystyle{alphaurl}
\bibliography{subrank}

\end{document}